\documentclass[12pt]{amsart}
\usepackage{pdfsync}
\usepackage{amscd}
\usepackage{amssymb}
\usepackage{enumitem}
\setlength{\textwidth}{6.5in}
\setlength{\oddsidemargin}{0in}
\evensidemargin=\oddsidemargin
\setenumerate{leftmargin=*, itemindent=0pt}
%
\newcommand\R{\mathbf R}

\newcommand\bH{\mathbf H}
\newcommand\bCa{{\mathbf O}}
\newcommand\LA{{\mathsf A}}

\newcommand\LG{{\mathsf G}}

\newcommand\eS{\rm S}
\newcommand\g{{\mathfrak{g}}}
\newcommand\h{{\mathfrak{h}}}

\newcommand\m{{\mathfrak{m}}}
\newcommand\p{{\mathfrak{p}}}
\newcommand\princ{\rm pr}
\newcommand\s{{\mathfrak{s}}}

\newcommand\op{\oplus}

\newcommand\so{{\mathfrak{so}}}
\newcommand\su{{\mathfrak{su}}}
\renewcommand\u{{\mathfrak{u}}}
\renewcommand\k{{\mathfrak{k}}}
\renewcommand\t{{\mathfrak{t}}}

\newcommand\Ad{\operatorname{Ad}}

\newcommand\SO{\operatorname{\sf SO}}
\newcommand\Sp{\operatorname{\sf Sp}}
\newcommand\SU{\operatorname{\sf SU}}
\newcommand\SUxU[2]{ {\rm {\sf S}({\sf U}(}#1){\rm {\sf U}(}#2{\rm ))} }

\newcommand\U{\operatorname{\sf U}}

\newcommand\Aut{\operatorname{Aut}}
\newcommand\Id{\operatorname{Id}}


\theoremstyle{plain}
\newtheorem{theorem}{Theorem}
\newtheorem*{Main}{Main Theorem}

\newtheorem{lemma}[theorem]{Lemma}
\newtheorem{proposition}[theorem]{Proposition}

\theoremstyle{definition}

\newtheorem{notation}[theorem]{Notation}
\newtheorem{example}[theorem]{Example}

\theoremstyle{remark}
\newtheorem{remark}[theorem]{Remark}

\numberwithin{equation}{section}
\numberwithin{theorem}{section}


\begin{document}
\title[Nonnegatively curved metrics]
{Nonnegatively curved homogeneous metrics in low dimensions}
\subjclass[2000]{Primary: 53C30; Secondary: 53C21; 57S15}
\author[M.\ M.\ Kerr]{Megan M.\ Kerr}
\address{Department of Mathematics, Wellesley College}
\email{mkerr@wellesley.edu}
\author[A.\ Kollross]{Andreas Kollross}
\address{Institut f\"{u}r Geometrie und Topologie, Universit\"{a}t Stuttgart}
\email{kollross@mathematik.uni-stuttgart.de}
\date{\today}

\begin{abstract}
We consider invariant Riemannian metrics on compact homogeneous spaces $G/H$
where an intermediate subgroup~$K$ between $G$ and~$H$ exists. In this case,
the homogeneous space $G/H$ is the total space of a Riemannian submersion. The
metrics constructed by shrinking the fibers in this way can be interpreted as
metrics obtained from a Cheeger deformation and are thus well known to be
nonnegatively curved. On the other hand, if the fibers are homothetically
enlarged, it depends on the triple of groups $(H,K,G)$ whether nonnegative
curvature is maintained for small deformations.

Building on the work of L.~Schwachh\"ofer and K.~Tapp \cite{ST},  we examine
all $G$-invariant fibration metrics on $G/H$ for $G$ a compact simple Lie group
of dimension up to 15. An analysis of the low dimensional examples provides
insight into the algebraic criteria that yield continuous families of
nonnegative sectional curvature.
\end{abstract}
\maketitle

\section{Introduction}
The study of Riemannian manifolds of nonnegative or positive sectional
curvature is one of the original questions of global Riemannian geometry. This
is an area of geometry that has motivated deep and beautiful mathematical
results, and  is still characterized more by its open questions than by its
known theorems.

We know few obstructions to nonnegative or positive curvature. There are some
topological restrictions, most famously  Bonnet-Myers and Synge's Theorem.
There are few known examples of spaces with positive curvature.  When we relax
the curvature condition to nonnegative curvature, we get more examples of
manifolds with nonnegative sectional curvature, many of these discovered within
the past ten years \cite{Wi2},\cite{Z1}.

All known examples of compact irreducible manifolds of nonnegative curvature,
homogeneous and inhomogeneous, come from constructions that begin with a
compact Lie group and a biinvariant metric. Riemannian submersion metrics are a
natural starting point, since, by O'Neill's formula, we know that taking a
quotient tends to increase sectional curvature. In light of the prevalence of
quotients of Lie group actions, it makes sense to fully understand the basic
setting.  In this paper we consider a simple deformation of homogeneous metrics
as our source for more examples of spaces of nonnegative sectional curvature.

To begin, we take a fibration of homogeneous spaces arising from a chain
$(H,K,G)$ of nested compact Lie groups $H \subsetneq K \subsetneq G$:
\begin{equation*}
\begin{array}{ccc}
K/H & \longrightarrow  & G/H \\
~ & ~ & \downarrow  \\
~ & ~ & G/K. \end{array}
\end{equation*}
Let $\g$ be the Lie algebra corresponding to $G$ and let $\k$ and $\h$ be the
Lie subalgebras corresponding to the subgroups $K$ and $H$ of~$G$,
respectively, such that  $\h \subset \k \subset \g$. Let $g_0$ be a biinvariant
metric  on $G$.  We use $g_0$ to fix orthogonal complements: Let $\m$ be the
subspace of~$\k$ orthogonal to~$\h$ such that $\k =\h \op \m$, the tangent
space in the direction of the fiber $K/H$, and let $\s$ be the subspace of~$\g$
orthogonal to~$\s$ such that $\g = \k \op \s$, the tangent space in the
direction of the base $G/K$. Notice,  $\g = \h \op \m \op \s$ and the tangent
space to the total space $G/H$ is $\p = \m \op \s$.  For any element $X$ in
$\p$, we may write $X =X^{\m} + X^{\s}$, where $X^{\m}$ in $\m$ denotes the
vertical component of~$X$, and $X^{\s}$ in $\s$ denotes the horizontal
component.
We define the following one-parameter family of metrics on $G/H$:
\begin{equation}\label{deformation}
g_t (X,Y) = \frac{1}{1-t} \cdot g_0(X^{\m},Y^{\m}) + g_0(X^{\s},Y^{\s}).
\end{equation}

\begin{Main}\label{main}
Let $G$ be a simple compact Lie group of dimension at most~15. Then the
homogeneous space $G/H$ with fibration metric $g_t$ corresponding to a chain
$(H,K,G)$ of nested compact Lie groups admits nonnegative sectional curvature
for small $t > 0$ if and only if one of the following holds:
\begin{itemize} \item[(i)]  $(K, H)$ is a symmetric pair, or more generally, $[\m,\m]^\m =0$;
\item[(ii)] the chain $(H,K,G)$ is one of $(\SU(2),\SO(4),\SO(5))$ or
    $(\SU(2),\SO(4),\LG_2)$ where in the second case the subgroup $\SU(2)$
    is such that $\SU(2)\subset\SU(3)\subset\LG_2$.
\end{itemize}
\end{Main}
Our result explores and builds on the following result of Schwachh\"ofer and
Tapp, who also showed that the chain $(\SU(2),\SO(4),\LG_2)$ satisfies the
inequality~($*$) below.
\begin{theorem}{\rm \cite{ST}}\label{ThmST}
(1) The metric $g_t$ has nonnegative curvature for small $t > 0$ if and only if
there exists some $C > 0$ such that for all $X$ and $Y$ in $\p$,
\begin{equation}\label{condition}
|[X^{\m}, Y^{\m}]^{\m}| \leq C|[X,Y]|.\tag{$*$}
\end{equation}
(2) In particular, if $(K, H)$ is a symmetric pair, then  $g_t$ has nonnegative
curvature for small $t > 0$, and in fact for all $t \in (-\infty, 1/4]$.
\end{theorem}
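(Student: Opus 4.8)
The plan is to reduce both implications of (1) to a single explicit formula for the unnormalized sectional curvature $\kappa_t(X,Y)=g_t(R_t(X,Y)Y,X)$, regarded as a function of $t$ and of $X,Y\in\p$, expanded to first order in $t$ about $t=0$. First I record the endpoint $t=0$: since $g_0$ is the normal homogeneous metric induced from the biinvariant metric through $G\to G/H$, O'Neill's formula gives
\[
\kappa_0(X,Y)=|[X,Y]_\h|^2+\tfrac14|[X,Y]_\p|^2\;\ge\;\tfrac14|[X,Y]|^2\;\ge\;0,
\]
with equality exactly on the set $Z$ of planes spanned by commuting vectors, i.e.\ $[X,Y]=0$; note also $\kappa_0\le|[X,Y]|^2$, so $\kappa_0$ is comparable to $|[X,Y]|^2$. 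To differentiate in $t$ I would view $g_t$ as the canonical variation of $g_0$ along the fibration $G/H\to G/K$, whose fibers $K/H$ are scaled by $\tfrac1{1-t}$, and apply the O'Neill submersion formulas; since the fibers need not be totally geodesic the vertical tensor contributes, and organizing these terms is the technical core. (Alternatively one can use the standard curvature formula for a left-invariant metric on a reductive homogeneous space, specialized to the metric endomorphism $\tfrac1{1-t}\,\mathrm{pr}_\m+\mathrm{pr}_\s$.)

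Next I localize. On the compact set of $g_0$-orthonormal pairs, $\kappa_0$ is continuous and bounded below by a positive constant off any neighborhood of $Z$; by continuity of $\kappa_t$ in all variables the curvature stays positive there for small $t$, so nonnegativity can only fail near $Z$. The question thus becomes the joint behavior of $\kappa_t$ as $[X,Y]\to0$ and $t\to0^+$. The claim I must extract from the expansion is that, to first order,
\[
\kappa_t(X,Y)=\kappa_0(X,Y)-c\,t\,|[X^\m,Y^\m]^\m|^2+O(t^2),\qquad c>0.
\]
Geometrically this says that enlarging the fibers ($t>0$) lowers curvature precisely through the intrinsic fiber term $[\m,\m]^\m$ measuring the failure of $(K,H)$ to be symmetric, while for $t<0$ (the Cheeger range) the same term raises it.

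Granting the expansion, the equivalence with $(*)$ falls out of the competition between $\tfrac14|[X,Y]|^2$ and $t\,|[X^\m,Y^\m]^\m|^2$. If $(*)$ holds then near $Z$ we have $|[X^\m,Y^\m]^\m|^2\le C^2|[X,Y]|^2\le 4C^2\kappa_0$, whence $\kappa_t\ge(\tfrac14-cC^2t)\,|[X,Y]|^2+O(t^2)\ge0$ for small $t>0$. If $(*)$ fails, then $\inf|[X,Y]|^2/|[X^\m,Y^\m]^\m|^2=0$ over planes with $[X^\m,Y^\m]^\m\neq0$; normalizing $|[X^\m,Y^\m]^\m|=1$ produces pairs $(X_n,Y_n)$ with $[X_n,Y_n]\to0$ (so $\kappa_0\to0$), and choosing $t_n\to0^+$ appropriately the destabilizing first-order term overtakes $\kappa_0$, giving $\kappa_{t_n}(X_n,Y_n)<0$. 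Hence $g_t$ fails to be nonnegatively curved for arbitrarily small $t>0$. The main obstacle is the expansion itself: deriving the correct sign and confirming that the sole destabilizing term is $|[X^\m,Y^\m]^\m|^2$ rather than some other combination of bracket components.

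For (2), if $(K,H)$ is symmetric then $[\m,\m]\subset\h$, so $[X^\m,Y^\m]^\m\equiv0$, condition $(*)$ holds trivially, and small-$t$ nonnegativity follows from (1). To reach the sharp interval $t\in(-\infty,1/4]$ I would not perturb but use the full formula: with the destabilizing term identically zero, $\kappa_t$ is an explicit rational function of $t$ assembled from the nonnegative bracket terms, and its nonnegativity can be checked globally; the endpoint $t=1/4$, where $\tfrac1{1-t}=\tfrac43$, then emerges as the largest fiber scaling for which these remaining terms still combine to a nonnegative expression.
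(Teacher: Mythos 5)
Your proposal rests entirely on a claimed expansion that is not only left underived (you flag it yourself as ``the technical core'') but is in fact \emph{false}. First, for context: the paper never proves this theorem --- it is quoted from \cite{ST}, and the only related argument supplied is Remark~\ref{trivial-Ks}; so your sketch must stand on its own. It does not. Write $\mu=t/(1-t)$, so that $g_t$ corresponds to the metric endomorphism equal to $1+\mu$ on $\m$ and $1$ on $\s$. Feeding this into the standard curvature formula for invariant metrics on $G/H$ (Besse, \emph{Einstein Manifolds}, 7.30) shows that $\kappa_t(X,Y)$ is a quadratic \emph{polynomial} in $\mu$ whose linear coefficient is
\[
-\tfrac34\,\bigl|[X,Y]^\m\bigr|^2+g_0\bigl([X,Y]^\m,[X^\m,Y^\m]^\m\bigr)+g_0\bigl([X,Y]^\h,[X^\m,Y^\m]^\h\bigr)+\tfrac12\,g_0\bigl([X,Y]^\s,[X^\s,Y^\m]-[Y^\s,X^\m]\bigr);
\]
every summand carries a factor of a component of $[X,Y]$, so the first variation of curvature at $t=0$ vanishes identically on commuting planes and therefore cannot equal $-c\,|[X^\m,Y^\m]^\m|^2$. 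Worse for your mechanism: on a plane with $[X,Y]=0$ one gets $\kappa_t=a_2\mu^2$ with $a_2=\tfrac14|[X^\s,Y^\m]+[Y^\s,X^\m]|^2-g_0([X^\s,X^\m],[Y^\s,Y^\m])$, and since $\kappa_t\ge0$ for all $t<0$ (Cheeger range), necessarily $a_2\ge0$: a commuting plane with $[X^\m,Y^\m]^\m\neq0$ \emph{never} becomes negatively curved. Concretely, for the paper's chain $(\Delta_\theta\SO(2),\SO(4),\SO(5))$ and its vectors $X=E_{34}+E_{13}$, $Y=E_{24}-E_{12}$, one computes $\kappa_t=\mu^2\,|E_{14}|^2>0$ for small $t>0$, even though $[X,Y]=0$ and $[X^\m,Y^\m]^\m=E_{23}^\m\neq0$. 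So when ($*$) fails, negative curvature appears only on planes \emph{near} such commuting planes, through a limiting argument; this is precisely why ($*$) is a uniform inequality over all of $\p$, and why the authors explicitly say they do not know whether failure of ($*$) is equivalent to the existence of such a commuting pair --- your expansion would make that open question trivially true.

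There is a second, independent flaw: even granting some expansion, your ``if'' direction does not close. An $O(t^2)$ remainder uniform over the compact set of $g_0$-orthonormal pairs has absolute size comparable to $t^2$, and cannot be absorbed into $(\tfrac14-cC^2t)\,|[X,Y]|^2$ on planes where $|[X,Y]|$ is arbitrarily small --- exactly the planes where nonnegativity is at stake, and where the sign is decided by the terms you discarded. A correct proof (this is what Schwachh\"ofer and Tapp do) must work with the exact formula: under ($*$) one bounds each $\mu$- and $\mu^2$-term by $|[X,Y]|^2$ together with terms of favorable sign; conversely, when ($*$) fails one takes sequences with $|[X_n^\m,Y_n^\m]^\m|=1$ and $[X_n,Y_n]\to0$ and perturbs them to produce, for each small $t>0$, a negatively curved plane. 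Your localization step (curvature can only fail near the zero set of $\kappa_0$) is sound, and your treatment of the first half of (2) (symmetric pair $\Rightarrow[\m,\m]\subseteq\h\Rightarrow$ ($*$) holds trivially) is correct and coincides with Remark~\ref{trivial-Ks}; but the sharp range $t\in(-\infty,1/4]$ is once again deferred to an ``explicit rational function'' that is never produced, so it too remains unproved.
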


\begin{remark}\label{trivial-Ks}
The first part of~(2) above follows from the observation that when $(K,H)$ is a
symmetric pair, we have $[\m,\m] \subseteq \h$. In this case the left hand side
of the inequality in condition~(\ref{condition}) is always zero. Also, note
that whenever $[\m,\m]=0$, as when $K$ is a torus, condition~(\ref{condition})
holds. We say that $H$ is a \emph{symmetric subgroup} of~$K$ if we have
$[\m,\m] \subseteq \h$.
\end{remark}

\begin{remark}
When $H$ is trivial, our fibration is
\begin{equation*}
K \to G \to G/K
\end{equation*}
and $g_t$ is in fact a {\sl left-invariant} metric on~$G$.  In this case,
Schwachh\"ofer proved $g_t$ has nonnegative curvature for small $t > 0$ only if
the semisimple part of~$\k$ is an ideal of~$\g$, see~\cite{Sch}. In particular,
when $\g$ is simple and $\k$ is nonabelian, one does not get nonnegative
curvature.
\end{remark}

Condition~(\ref{condition}) follows if the chain $(H,K,G)$ satisfies the
hypothesis of the following Theorem of Schwachh\"{o}fer and Tapp.  In this case,
more arbitrary changes of the metric on $G / H$ preserve nonnegative curvature.

\begin{theorem}{\rm \cite{ST}}\label{ThmST2}
If there exists $C > 0$ such that for all $X, Y \in \p$,
\begin{equation}\label{condition2}
|X^{\m} \wedge Y^{\m}| \le  C\, |[X, Y ]|,\tag{$**$}
\end{equation}
then any left-invariant metric on~$G$ sufficiently close to~$g_0$ which is
$\Ad_H$-invariant and is a constant multiple of~$g_0$ on~$\s$ and $\h$ (but
arbitrary on $\m$) has nonnegative sectional curvature on all planes contained
in~$\p$; hence, the induced metric on $G/H$ has nonnegative sectional
curvature.
\end{theorem}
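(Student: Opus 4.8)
The plan is to prove nonnegativity of the unnormalized sectional curvature $\kappa(X,Y)=Q(R(X,Y)Y,X)$ of the left-invariant metric $Q$ on every $2$-plane spanned by $X,Y\in\p$, and then to pass to $G/H$. Since $Q$ is $\Ad_H$-invariant and block diagonal with respect to $\g=\h\op\m\op\s$, it descends to a metric $\bar Q$ for which $\pi\colon(G,Q)\to(G/H,\bar Q)$ is a Riemannian submersion with vertical space $\h$ and horizontal space $\p$. O'Neill's formula then gives $\kappa_{G/H}(\pi_*X,\pi_*Y)=\kappa(X,Y)+\tfrac34\,|[X,Y]^\h|^2\ge\kappa(X,Y)$ for $X,Y\in\p$, so nonnegative curvature on $\p$-planes upstairs yields nonnegative curvature on $G/H$. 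The whole content is therefore the curvature estimate on $\p$ inside $(G,Q)$.

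First I would fix coordinates on the admissible metrics. After an overall rescaling I may assume $Q=g_0$ on $\s$; writing $Q(\cdot,\cdot)=g_0(\phi\,\cdot,\cdot)$ for the $g_0$-symmetric positive $\Ad_H$-equivariant operator $\phi$ preserving $\g=\h\op\m\op\s$, the hypotheses say that $\phi$ is the identity on $\s$, a positive scalar on $\h$, and an arbitrary positive map on $\m$, with $\phi$ close to $\Id$. I then insert $Q$ into the standard formula for the sectional curvature of a left-invariant metric, expressed through the Lie bracket and the symmetric bilinear map $U$ determined by $Q$. At $\phi=\Id$ the metric is biinvariant, $U$ vanishes, and $\kappa(X,Y)=\tfrac14\,|[X,Y]|^2\ge0$, with equality precisely on the commuting planes $[X,Y]=0$; hence negative curvature can appear only as a perturbation concentrated near this zero set.

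The heart of the matter --- and the step I expect to be the main obstacle --- is to control the correction to $\kappa$ as $\phi$ leaves $\Id$. Because the nontrivial deformation is carried entirely by $\m$, I would aim for an estimate
\begin{equation*}
\kappa(X,Y)\ \ge\ \tfrac14\,|[X,Y]|^2\;-\;c\,\|\phi-\Id\|\,|X^\m\wedge Y^\m|^2\;-\;c'\,\|\phi-\Id\|\,|[X,Y]|^2,
\end{equation*}
with $c,c'$ independent of $X,Y$. The delicate term is the middle one: a priori the first variation of sectional curvature involves covariant derivatives of the deformation and could couple $\m$ to $\s$ and $\h$, yielding only a weaker bound. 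One must check that, for a left-invariant deformation supported on $\m$, every term in the linear-in-$(\phi-\Id)$ part of $U$, and hence of $\kappa$, factors through the projection of the $2$-plane to $\Lambda^2\m$; this is exactly what produces the wedge norm $|X^\m\wedge Y^\m|^2$. The remaining corrections --- the scalar on $\h$ and the second-order terms --- are already proportional to $|[X,Y]|^2$ and are absorbed into the last summand.

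Granting this estimate, condition~($**$) supplies $|X^\m\wedge Y^\m|^2\le C^2\,|[X,Y]|^2$, so that
\begin{equation*}
\kappa(X,Y)\ \ge\ \bigl(\tfrac14-(c\,C^2+c')\,\|\phi-\Id\|\bigr)\,|[X,Y]|^2\ \ge\ 0
\end{equation*}
as soon as $\|\phi-\Id\|<\tfrac14\,(c\,C^2+c')^{-1}$, that is, as soon as $Q$ is sufficiently close to $g_0$, with a closeness threshold depending on $C$ and, through $c$ and $c'$, on the structure constants. This gives nonnegative curvature on every $\p$-plane of $(G,Q)$, and the submersion step of the first paragraph completes the proof on $G/H$.
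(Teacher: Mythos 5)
Your O'Neill reduction to planes in $\p$ is fine, and your framing (perturb around $g_0$, negativity can only appear near the zero set of $\tfrac14|[X,Y]|^2$) is the right starting point; but note the paper offers no proof to compare against --- this theorem is quoted from \cite{ST} --- so your argument must stand on its own, and its central estimate does not. The inequality
\[
\kappa(X,Y)\ \ge\ \tfrac14|[X,Y]|^2-c\,\|\Phi-\Id\|\,|X^\m\wedge Y^\m|^2-c'\,\|\Phi-\Id\|\,|[X,Y]|^2
\]
is not only left unproved (``one must check\dots'', ``granting this estimate\dots''); it is false. Write $Q=g_0(\Phi\,\cdot,\cdot)$ with $\Phi=\Id+\varepsilon A$, $A$ vanishing on $\s$ and scalar on $\h$, so $AX=AX^\m$ for $X\in\p$. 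In the Cheeger--Ebin curvature formula for left-invariant metrics, the terms other than $|U(X,Y)|_Q^2-Q(U(X,X),U(Y,Y))$ are exactly linear in $\Phi$, and using $\Phi U(X,Y)=\tfrac12([X,\Phi Y]+[Y,\Phi X])$ and $\ad$-invariance of $g_0$ one finds that the $\varepsilon$-linear part of $\kappa_Q(X,Y)$ is
\[
-\tfrac34\,\varepsilon\,g_0(A[X,Y],[X,Y])\;+\;\tfrac12\,\varepsilon\,g_0\bigl([X,Y],\,[X,AY^\m]-[Y,AX^\m]\bigr).
\]
The second summand is \emph{linear} in $|[X,Y]|$, not quadratic, and it does not factor through $X^\m\wedge Y^\m$ (it survives when $Y^\m=0$), so no term-by-term bound of your proposed shape can hold. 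Concretely, take $G=\SO(4)$, $K=\SO(3)$ (upper block), $H=\{e\}$, so $\m=\operatorname{span}\{E_{12},E_{13},E_{23}\}$, $\s=\operatorname{span}\{E_{14},E_{24},E_{34}\}$; let $A$ swap $E_{12}\leftrightarrow E_{23}$, kill $E_{13}$, and vanish on $\s$ (an admissible deformation for $0<\varepsilon<1$). For $Y=E_{14}$ and $X=(\Id+\varepsilon A)^{-1}E_{23}$ one has $X^\m\wedge Y^\m=0$ and $[X,Y]=\tfrac{\varepsilon}{1-\varepsilon^2}E_{24}\neq 0$, yet $\kappa_Q(X,Y)=0$; your inequality would force $\kappa_Q(X,Y)\ge(\tfrac14-c'\varepsilon)|[X,Y]|^2>0$.

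What makes the theorem true is a cancellation that any such separate bounding of terms throws away. For planes with $Y^\m=0$ one has the exact identity
\[
\kappa_Q(X,Y)=\tfrac14\bigl|[X,Y]^\s-\varepsilon[Y,AX^\m]\bigr|^2+\tfrac14\,g_0\bigl((\Id-3\varepsilon A)[X,Y]^{\m\op\h},[X,Y]^{\m\op\h}\bigr)\ \ge\ 0:
\]
the dangerous first-order cross term must be completed to a square with the nonnegative second-order term $|U(X,Y)|_Q^2$, so the lower bound involves a \emph{modified} bracket rather than $|[X,Y]|^2$ itself --- this is exactly what the $\SO(4)$ plane above detects (it is $Q$-flat although $[X,Y]\neq 0$). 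In the general case one must in addition control $-Q(U(X,X),U(Y,Y))=-\varepsilon^2 g_0([X,AX^\m],\Phi^{-1}[Y,AY^\m])$, which carries \emph{no} factor of $[X,Y]$ whatsoever, contrary to your claim that the second-order terms are ``already proportional to $|[X,Y]|^2$''. That term is precisely where condition~(\ref{condition2}) must enter: rotating the basis of the plane so that $g_0(X^\m,Y^\m)=0$ gives $|X^\m|\,|Y^\m|=|X^\m\wedge Y^\m|\le C\,|[X,Y]|$, which tames it. Your outline contains neither the completion of the square nor this use of the wedge bound; since the intermediate statement you substitute for them is false rather than merely unverified, the proposal has a genuine gap that cannot be repaired along the stated lines.
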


We are interested in the class of metrics on $G/H$ obtained via scaling along
the fibration above.  The goal is a further exploration of the algebraic
sources of condition~(\ref{condition})
to find new spaces with metrics of nonnegative curvature within this framework.
In this paper we consider all chains where $G$ is a low-dimensional simple
compact Lie group.  In all cases where $(K,H)$ is not a symmetric pair, we find
$X=X^\m + X^\s$ and $Y=Y^\m + Y^\s$ in $\p=\m \op \s$ for which  $[X,Y]=0$ yet
$[X^{\m},Y^{\m}]^{\m} \neq 0$; hence, condition~(\ref{condition}) fails. We do
not know, however, whether the failure of condition~($*$) is equivalent to the
existence of such a pair of vectors~$X,Y$.

It is clear that whenever there is such a pair of vectors $X,Y \in \m\op\s$ for
some chain $(H,K,G)$, then condition~(\ref{condition}) fails as well for  the
chain $(H', K, G')$, for every closed subgroup $H' \subset H$, and for every
$G'$ such that $G \subset G'$, cf.~Lemma~\ref{LemmaHprime}.

\begin{example}
To illustrate the delicate relationship here between the algebra and the
geometry, we give the following pair of fibrations.  In $\LG_2$, there are two
subgroups in $\SO(4)$, each isomorphic to $\SU( 2)$, but not conjugate in
$\LG_2$.  There are  two chains $(H, \SO(4), \LG_2)$ with $H \cong \SU(2)$. We
let $\widetilde{\SU(2)}$ denote the $\SU(2)$ which is \emph{not} a subgroup of
$\SU(3)$ in $\LG_2$, cf.~Table~\ref{TSubgroupsOfG2}.  The chain
$(\SU(2),\SO(4),\LG_2)$ satisfies condition~(\ref{condition}), while the  chain
$(\widetilde{\SU(2)},\SO(4),\LG_2)$ does not.  Notice in both of these
examples, the base is $\LG_2/\SO(4)$ and the fibers are real projective
spaces~$\R{\rm P}^3$ with a symmetric metric.
This pair of fibrations shows that a complete understanding of the algebraic
criteria which govern the geometry here must go beyond notions like
subalgebras, ideals, rank, etc.
\end{example}


\section{Low-dimensional examples}
In this section we analyze all simple compact Lie groups~$G$ of dimension up to
15. For each Lie group $G$, we completely determine the chains of closed
connected subgroups $H \subsetneq K \subsetneq G$ for which
condition~(\ref{condition}) holds.  Thus, we find all those homogeneous spaces
$G/H$ admitting a one-parameter family of fibration metrics with nonnegative
curvature.

For each Lie group $G$, whenever $K$ is a torus, condition~(\ref{condition})
holds trivially, by Remark~\ref{trivial-Ks}.  Thus we do not need to further
consider the tori $K$ in $G$. Similarly, condition~(\ref{condition}) holds when
$H$ is a \emph{symmetric subgroup} of~$K$. Each Lie group $G$ we consider here
is simple,  thus when $K$ is nonabelian, we will not need to consider the case
of trivial $H$.  Recall that when $H$ is trivial, our fibration metrics are in
fact left-invariant metrics on $G$. In \cite{Sch}, Schwachh\"ofer proves they
have nonnegative curvature only if the semi-simple part of~$\k$ is an ideal
of~$\g$.

In what follows, we will need the following lemmas, included here for
convenience.

\begin{lemma}\label{LemmaBrInt}
Let $(H,K,G)$ be a chain of compact groups such that two elements of~$\m$
commute if and only if they are linearly dependent.  If
\[
\{[U,V] \mid  U, V \in \m\}\cap \overline{\{[W,Z]^{\k} \mid W, Z \in \s\}} =
\{0\},
\]
then condition~(\ref{condition2}) holds. (Here $\overline{S}$ denotes the
topological closure of a subset $S$.)
\end{lemma}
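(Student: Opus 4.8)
The plan is to deduce condition~(\ref{condition2}) from two separate comparison inequalities, each obtained by a compactness argument. Throughout I would use the fact that, since $g_0$ is biinvariant and $\k$ is a subalgebra, each operator $\ad_W$ with $W \in \k$ is skew-symmetric and preserves~$\k$, hence preserves $\s = \k^\perp$; thus $[\k,\s] \subset \s$. Consequently the cross terms $[X^\m, Y^\s]$ and $[X^\s, Y^\m]$ lie in $\s$, and since $[X^\m, Y^\m] \in \k$ already, taking $\k$-components in $[X,Y]$ gives the clean identity
\begin{equation*}
[X,Y]^\k = [X^\m, Y^\m] + [X^\s, Y^\s]^\k, \qquad X, Y \in \p .
\end{equation*}

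First I would establish the auxiliary inequality
\begin{equation*}
|U \wedge V| \le c\,|[U,V]| \qquad \text{for all } U, V \in \m \tag{A}
\end{equation*}
for some $c > 0$. On the compact set of orthonormal pairs in~$\m$, the map $(U,V) \mapsto |[U,V]|$ is continuous and, by the hypothesis that commuting elements of~$\m$ are linearly dependent, strictly positive; hence it attains a positive minimum~$\delta$. A Gram--Schmidt reduction then extends the bound to all pairs: replacing $V$ by its component orthogonal to~$U$ alters neither $U \wedge V$ nor $[U,V]$, after which one factors out the lengths to obtain (A) with $c = 1/\delta$. (When $\dim\m \le 1$ both sides vanish and there is nothing to prove.)

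Next I would show there is $C' > 0$ with $|[X^\m, Y^\m]| \le C'\,|[X,Y]|$ for all $X, Y \in \p$; combined with (A) applied to $U = X^\m$, $V = Y^\m$, this yields condition~(\ref{condition2}) with $C = cC'$. Suppose the bound fails. Then there is a sequence $X_n, Y_n \in \p$ with $|[X_n^\m, Y_n^\m]| = 1$ and $|[X_n, Y_n]| \to 0$. After passing to a subsequence, $[X_n^\m, Y_n^\m] \to A$ with $|A| = 1$, so $A$ lies in the closure of $\{[U,V] \mid U,V \in \m\}$. The displayed identity and $|[X_n,Y_n]| \to 0$ give $[X_n^\s, Y_n^\s]^\k \to -A$, so $-A$, and hence $A$ (the bracket set being symmetric under negation), lies in $\overline{\{[W,Z]^\k \mid W,Z \in \s\}}$.

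The main obstacle is the asymmetry in the hypothesis: the intersection is taken between the set $\{[U,V]\mid U,V\in\m\}$ \emph{itself} and the \emph{closure} of the $\s$-bracket set, whereas the argument above only places $A$ in the closure of the former. I would resolve this by showing, using~(A), that $\{[U,V] \mid U, V \in \m\}$ is already closed. Given $[U_n, V_n] \to A$, Gram--Schmidt lets me assume $U_n \perp V_n$, and rescaling $(U_n, V_n) \mapsto (\lambda_n U_n, \lambda_n^{-1} V_n)$ lets me assume $|U_n| = |V_n|$ without changing the bracket; then (A) gives $|U_n|^2 = |U_n \wedge V_n| \le c\,|[U_n,V_n]|$, which is bounded, so a subsequence converges to a pair $(U,V)$ with $[U,V] = A$. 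Thus $A$ lies in the set itself, whence $A \in \{[U,V]\} \cap \overline{\{[W,Z]^\k\}} = \{0\}$, contradicting $|A| = 1$. This proves the second inequality, and composing it with~(A) completes the proof of condition~(\ref{condition2}).
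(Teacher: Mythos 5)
Your proof is correct, but it is organized differently from the paper's. The paper (following Schwachh\"ofer--Tapp, Proposition 4.2) runs a single compactness argument in which the \emph{pairs} are normalized: assuming~(\ref{condition2}) fails, one arranges (by Gram--Schmidt and rescaling, which change neither the wedge nor the vanishing of $[X,Y]$) that each $(X_r^\m, Y_r^\m)$ is \emph{orthonormal}, passes to a convergent subsequence of these pairs, and obtains $B=[X^\m,Y^\m]$ with $(X^\m,Y^\m)$ orthonormal. The point of this normalization is that $B$ is then automatically a bracket of two elements of $\m$ --- it lies in the set $\{[U,V]\}$ itself, not merely in its closure, and it is nonzero by the commuting hypothesis --- so the set-versus-closure asymmetry you identify as ``the main obstacle'' never arises. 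You instead normalize the \emph{brackets} ($|[X_n^\m,Y_n^\m]|=1$), which creates that asymmetry, and then repair it by proving that $\{[U,V]\mid U,V\in\m\}$ is closed; note that your closedness argument (orthogonalize, equalize norms, bound the norms via your inequality~(A), extract a convergent subsequence of pairs) is precisely the paper's pair-normalization trick, just packaged as a sub-lemma. Your route is longer but buys two reusable quantitative statements: the inequality $|U\wedge V|\le c\,|[U,V]|$ on $\m$ (a uniform version of the commuting hypothesis) and the bound $|[X^\m,Y^\m]|\le C'\,|[X,Y]|$ on $\p$, whose composition gives~(\ref{condition2}); it also makes explicit the identity $[X,Y]^\k=[X^\m,Y^\m]+[X^\s,Y^\s]^\k$, which the paper uses silently. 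The paper's version is shorter and needs no auxiliary inequality.
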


\begin{proof}
This is a reformulation of the method that is used to prove
\cite[Proposition~4.2]{ST}. For the convenience of the reader we reproduce the
proof of Schwachh\"{o}fer and Tapp here: Suppose condition~(\ref{condition2}) is
not satisfied. Then there exist sequences $\{X_r\}$ and $\{Y_r\}$ in~$\p$ such
that for each $r$ the pair of vectors $(X_r^{\m} , Y_r^{\m})$ is orthonormal
and $\lim [X_r, Y_r] = 0$. After passing to subsequences, we know orthonormal
limits $X^{\m} := \lim X_r^{\m}$ and $Y^{\m} := \lim Y_r^{\m}$ exist.  By
hypothesis, $B := [X^{\m}, Y^{\m}] \neq 0$. Meanwhile, $0 = \lim [X_r,
Y_r]^{\k} = \lim [ X_r^{\m},Y_r^{\m} ] + \lim [ X_r^{\s}, Y_r^{\s} ]^{\k}$, so
that $B = -\lim[ X_r^{\s}, Y_r^{\s} ]^{\k}$ is a nonzero element of $\{ [U,V]
\mid U, V \in \m\}\cap \overline{\{[W,Z]^{\k} \mid W, Z \in \s\}}$. This yields
our contradiction. Thus condition~(\ref{condition2}) must hold.
\end{proof}

\begin{lemma}\label{LemmaHprime}
Let $H' \subseteq H \subsetneq K \subsetneq G \subseteq G'$ be a chain of
nested compact groups such that condition~(\ref{condition}) cannot hold for the
chain $(H,K,G)$. Then condition~(\ref{condition}) cannot hold for the chain
$(H', K,G')$.
\end{lemma}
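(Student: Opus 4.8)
The plan is to show that any pair of vectors witnessing the failure of condition~(\ref{condition}) for the chain $(H,K,G)$ also witnesses its failure for $(H',K,G')$. First I would record how the orthogonal decompositions nest, after fixing compatible biinvariant metrics so that the one on $G'$ restricts to the one on $G$. Since $\h' \subseteq \h$, the complement $\m' = \k \ominus \h'$ contains $\m = \k \ominus \h$; writing $\mathfrak{d} = \h \ominus \h'$ we have the orthogonal splitting $\m' = \mathfrak{d} \op \m$. Since $\g \subseteq \g'$, the complement $\s' = \g' \ominus \k$ contains $\s = \g \ominus \k$. Consequently $\p = \m \op \s \subseteq \m' \op \s' = \p'$, and $X,Y$ chosen in $\p$ automatically lie in $\p'$.

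Next I would compare the relevant components. For $X \in \p$ we have $X^\m \in \m \subseteq \m'$ and $X^\s \in \s \subseteq \s'$, and these are orthogonal pieces of the splitting $\p' = \m' \op \s'$; by uniqueness of orthogonal decomposition, the $\m'$-component of $X$ (call it $X^{\m'}$) equals its $\m$-component, so $X^{\m'} = X^\m$ and hence $[X^{\m'}, Y^{\m'}] = [X^\m, Y^\m]$ for all $X, Y \in \p$. Because $\m \subseteq \m'$, orthogonal projection onto $\m$ factors through projection onto $\m'$ and therefore does not increase norms, which gives
\[
|[X^{\m'}, Y^{\m'}]^{\m'}| = |[X^\m, Y^\m]^{\m'}| \ge |[X^\m, Y^\m]^\m|.
\]
Finally, since $\g$ is a Lie subalgebra of $\g'$ and the metrics are compatible, the bracket $[X,Y]$ and its norm are the same whether computed in $\g$ or in $\g'$.

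With these identities the conclusion is immediate. By hypothesis condition~(\ref{condition}) fails for $(H,K,G)$, so for every $C > 0$ there exist $X, Y \in \p$ with $|[X^\m, Y^\m]^\m| > C\,|[X,Y]|$. The same $X, Y$ lie in $\p \subseteq \p'$ and satisfy $|[X^{\m'}, Y^{\m'}]^{\m'}| \ge |[X^\m, Y^\m]^\m| > C\,|[X,Y]|$, so no constant can work for $(H',K,G')$ either; thus condition~(\ref{condition}) fails for $(H',K,G')$.

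I expect the only delicate point to be the bookkeeping of the nested orthogonal complements, in particular arranging the biinvariant metric on $G'$ to restrict to the given one on $G$ so that the inclusions $\m \subseteq \m'$, $\s \subseteq \s'$ and the component identity $X^{\m'} = X^\m$ all hold simultaneously. If one prefers to avoid this, the argument can be carried out in two separate steps, first shrinking $H$ to $H'$ with $G$ fixed (which only enlarges $\m$) and then enlarging $G$ to $G'$ with $H'$ fixed (which leaves $\m$ unchanged), composing the two; and since only comparability of norms on the finite-dimensional space $\g$ is needed, an exact match of metrics is not essential. Everything else is a direct consequence of the fact that enlarging $\m$ can only increase the left-hand side of~(\ref{condition}) while leaving the right-hand side unchanged.
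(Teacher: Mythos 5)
Your proof is correct and follows essentially the same route as the paper's: both rest on the nesting $\m \subseteq \m'$, $\s \subseteq \s'$, the component identity $X^{\m'} = X^{\m}$ for $X \in \p$, and the resulting inequality $|[X^{\m},Y^{\m}]^{\m'}| \ge |[X^{\m},Y^{\m}]^{\m}|$. The only cosmetic difference is that the paper phrases the failure of condition~(\ref{condition}) via normalized sequences with $\lim [X_r,Y_r]=0$, while you use the equivalent ``for every $C$ there exist $X,Y$'' negation.
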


\begin{proof}
Let $\g = \h \oplus \m \oplus \s$ such that $\m$ is the orthogonal complement
of~$\h$ in~$\k$ and $\s$ is the orthogonal complement of~$\k$ in~$\g$. By
hypothesis, we can find sequences $\{X_r\}$ and $\{Y_r\}$ in~$\p = \m \op \s$
such that $|[X_r^{\m} , Y_r^{\m}]^{\m}| = 1$ and $\lim [X_r,Y_r] = 0$. Now let
$\g' = \h' \op \m' \op \s'$ be the decomposition of~$\g'$ such that $\m'$ is
the orthogonal complement of~$\h'$ in~$\k$ and $\s'$ is the orthogonal
complement of~$\k$ in $\g'$. We have $\m \subseteq \m'$ and $\s \subseteq \s'$.
Then $\p \subseteq \p' = \m' \op \s'$. The same sequences $\{X_r\}$ and
$\{Y_r\}$ are in~$\p'$ and we have
\[
|[X_r^{\m'} , Y_r^{\m'}]^{\m'}|
 = |[X_r^{\m} , Y_r^{\m}]^{\m'}|
 \ge \left|[X_r^{\m} , Y_r^{\m}]^{\m}\right| = 1.
\]
Thus condition~(\ref{condition}) is not satisfied for the chain $(H',K,G')$.
\end{proof}

\begin{notation}
By $\LA_1^{\princ}$, $\SO(3)^{\princ}$, or $\SU(2)^{\princ}$, we denote
subgroups of certain simple Lie groups which correspond to \emph{principal
three-dimensional subalgebras} see~\cite[\S9]{dynkin1}.
\end{notation}

 \begin{notation}
 When we give specific choices of~$X$ and $Y$ in $\p$, we will use the
 following standard Lie algebra notation. Let  $E_{ij}$ ($i\neq j$) denote the
 skew-symmetric matrix with $ij^{th}$ entry 1 and $ji^{th}$ entry $-1$, all
 other entries 0. Let  $F_{ij}$ ($i\neq j$) denote the symmetric matrix with
 $ij^{th}$ and $ji^{th}$ entries 1, all other entries 0. Let $F_{jj}$ denote
 the diagonal matrix with $jj^{th}$ entry 1, all other entries 0.
 \end{notation}

\subsection{  \boldmath $G = \SU(2)$}
We note that $\SU(2)$ has no nonabelian subgroups $K$. Since the biinvariant
metric on $G = \SU(2) = {\sf S}^3$ has positive curvature,
 it is no surprise that the only  chain, $(\Id,\SO(2),\SU(2))$,  fulfills
condition~(\ref{condition}). In fact, all left-invariant metrics with
nonnegative sectional curvature on $\SU(2)$ are classified in \cite{BFSTW}.


\subsection{ \boldmath $G = \SU(3)$ }
There are only three conjugacy classes of non-abelian subgroups of~$\SU(3)$,
which we will denote as ${\sf S}(\U(1)\times \U(2))$, $\SU(2)$ and $\SO(3)$.

\begin{enumerate}

\item {\boldmath $K = {\sf S}(\U(2)\times \U(1))$.} All closed subgroups $H
    \subset K$ of rank~2 are symmetric.

\begin{enumerate}

\item The subgroup $H=\SU(2) \subset K$ has a one-dimensional
    complement $\m$. Since $[\m,\m]=0$,  condition~(\ref{condition}) is
    trivially satisfied.

\item For the real circle subgroup $H=\SO(2) \subset \SU(2) \subset K$,
    we see $[\m,\m] \subseteq \h$. Condition~(\ref{condition}) again is
    trivially satisfied.

\item Consider the one-parameter family of  circles
    $H=\Delta_{p,q}\U(1) \subset T^2 \subset K=\U(2)$.  Recall, the Lie
    subalgebra is (for coprime integers
$p$ and $q$)
\[
\qquad \qquad \Delta_{p,q}\u(1) := {\rm span}\left\{\left(\begin{array}{ccc}
pi &  0 & 0  \\
0  & qi  &  0 \\
0  & 0  &   -(p+q)i  \end{array}\right) = i(pF_{11} +qF_{22}
-(p+q)F_{33})\right\}.
\]
Note that  $\Delta_{(p,q)}\U(1)$ and $\Delta_{(-p,-q)}\U(1)$ denote the
same subgroup; thus, without loss of generality we will always take
$p\geq 0$. We find that  condition~(\ref{condition}) is fulfilled if
and only if $(p,q)=(1,-1)$. When $(p,q)=(1,-1)$,  one sees that this
circle is a symmetric subgroup: $[\m,\m] \subseteq \h =
\Delta_{(1,-1)}\U(1)$. Thus condition~(\ref{condition}) is  fulfilled.
Otherwise, we may take $X^{\m} = E_{12}$, $Y^{\m} = iF_{12}$ (in $\m$
for all $(p,q)$) and $X^{\s} = E_{13}+E_{23}$, $Y^{\s} =
i(F_{23}-F_{13})$ in $\s$. We see that $[X^{\m} +X^{\s},Y^{\m} +
Y^{\s}]=0$, while $[X^{\m},Y^{\m}] = 2i(F_{11}-F_{22})$. Provided
$(p,q) \neq (1,-1)$, this has a nonzero $\m$-component.
\end{enumerate}

\item{\boldmath$K = \SU(2)$ and $K=\SO(3)$.} One-dimensional subgroups of
    $\SU(2)$ and $\SO(3)$ are symmetric.

\end{enumerate}

\begin{remark}\label{K=A_1}
Whenever $K=\SU(2)$ or $K=\SO(3)$, the only nontrivial subgroups $H$ are
circles, which are symmetric. Furthermore, whenever $K$ is abelian,
$[\m,\m]=0$, and hence condition~(\ref{condition}) always holds.
\end{remark}


\subsection{ \boldmath $G = \SO(5)$}

In Table~\ref{TSubgroupsOfB2} we give the conjugacy classes of closed
nonabelian connected subgroups of $\SO(5)$, together with inclusion relations.
Note that the two normal subgroups of~$\SO(4) = \SU(2) \cdot \SU(2)$ are
conjugate by an inner automorphism of~$\SO(5)$.

\begin{table}[h]
\begin{picture}(100,100)
\put(0,40){\fontsize{10}{14}
\begin{tabular}{cccccc}
 & & $\SO(5)$ &&& \\
 & & &&& \\
 $\SO(4)$ & & $\SO(3) {\times} \SO(2)$ &&& $\LA_1^{\princ}$ \\
 & & &&& \\
 $\U(2)$ & & $\SO(3)$ &&& \\
 & & &&& \\
 $\SU(2)$ & & &&& \\
 & & &&& \\
\end{tabular}}
\put(20,16){\line(0,1){11}} \put(20,44){\line(0,1){13}}
\put(85,44){\line(0,1){13}} \put(85,70){\line(0,1){13}}
\put(148,72){\line(-6,1){40}} \put(62,45){\line(-3,1){28}}
\put(24,72){\line(4,1){32}}
\end{picture}\hspace{5em}
\caption{Conjugacy classes of nonabelian connected subgroups in~$\SO(5)$}
\label{TSubgroupsOfB2}
\end{table}

\begin{enumerate}

\item{\boldmath$K = \SO(4)$.}  Here \[\SO(4) = \begin{pmatrix} 1 & 0
    \\ 0 & \SO(4) \end{pmatrix} \subset \SO(5).\]
All nonabelian closed subgroups of $\SO(4)$ are symmetric except $\SU(2)$.

\begin{enumerate}

\item Let us consider the chain $(\SU(2),\SO(4),\SO(5))$. In this case,
    $\m \subset \g$ is a subalgebra isomorphic to~$\su(2)$. Since all
    nonzero matrices in $\su(2)$ are invertible, all nonzero matrices
    in the set of brackets $[U,V]$ where $U,V \in \m$ are of rank~4. On
    the other hand, for elements $W,Z \in \s$, brackets $[W,Z] =
    [W,Z]^{\k}$ have rank at most~2. Thus we conclude that the chain
    $(\SU(2),\SO(4),\SO(5))$ fulfills the hypothesis of
    Lemma~\ref{LemmaBrInt}, and hence of Theorem~\ref{ThmST2}.  Indeed,
    this chain here corresponds on the Lie algebra level to the chain
    $(\Sp(1), \Sp(1) \times \Sp(1), \Sp(2))$, which is exactly the Hopf
    fibration $ \eS^7 \to \bH{\rm P}^1 = \eS^4. $ Since $\eS^7$ with
    the normal homogeneous metric has positive sectional curvature, so
    has any sufficiently small deformation.

\item The subgroup $H=T^2 \subset \SO(4)$ is symmetric.

\item Consider the one-parameter family of diagonal circles
    $H=\Delta_{\theta} \SO(2) \subset T^2 \subset K=\SO(4)$. On the Lie
    algebra level,
\[
\h = \rm{span}\{\cos\theta \,E_{23} + \sin\theta
\,E_{45}\} \subset \rm{span}\{E_{23}, E_{45}\} =\t.
\]
We show that condition~(\ref{condition}) fails for all $\theta$. If
$\sin\theta \neq 0$, we may take $X^{\m} = E_{34}$, $Y^{\m} = E_{24}$,
which lies in $\m$ for all $\theta$, $X^{\s} =E_{13}$, $Y^{\s} =
-E_{12}$. We see $[X^{\m}, Y^{\m}] = E_{23}$, which has a nonzero
$\m$-component, yet  $[X,Y] = [X^{\m} + X^{\s}, Y^{\m} + Y^{\s}] = 0.$
If instead $\sin\theta = 0$, we  choose $X^{\m} = E_{25}$, $Y^{\m} =
E_{24}$ (again, in $\m$ for all $\theta$) and $X^{\s} =E_{14}$, $Y^{\s}
=E_{15}$. We  have $[X^{\m}, Y^{\m}] = E_{45}$, which also has a
nonzero $\m$-component, while $[X,Y]=0$. Thus
condition~(\ref{condition}) fails.

\end{enumerate}

\item{\boldmath$K = \SO(3) \times \SO(2)$.} Here \[\SO(3)\SO(2) =
    \begin{pmatrix}
    \SO(3) & 0
    \\ 0 & \SO(2) \end{pmatrix} \subset \SO(5).\]
\begin{enumerate}

\item For the subgroups $H=\SO(3)\times \Id$, $H=\SO(2)\times\SO(2)$,
    and $H=\SO(2) \times \Id$ of~$K$ we have $[\m,\m]^{\m}=0$.

\item Consider the one-parameter family of one-dimensional subalgebras
\[
\h = \rm{span}\{\cos\theta \,E_{13} + \sin\theta
\,E_{45}\} \subset \rm{span}\{E_{13}, E_{45}\} = \t.
\]
When $\sin\theta =0$ we have again $H=\SO(2)\times \Id$, with
$[\m,\m]\subseteq \h$.  But when $\sin\theta\neq 0$, we no longer have
that $(K,H)$ is a symmetric pair and we show
condition~(\ref{condition}) is not fulfilled. We take the following $X$
and $Y$ in $\p$: $X^{\m} = E_{13}$, $Y^{\m} = E_{23}$, and $X^{\s} =
-E_{14}$, $Y^{\s} =E_{24}$. Then $[X^{\m},Y^{\m}] =
[E_{13},E_{23}]=-E_{12}$ has a nonzero $\m$-component when
$\sin\theta\neq 0$, yet $[X,Y] =0.$ Thus condition~(\ref{condition})
fails.

\end{enumerate}

\item {\boldmath$K = \U(2)$.} Here \[\U(2) = \left\{\begin{pmatrix} A & -B
    \\ B & A \end{pmatrix} \in \SO(4) \mid A + iB \in\U(2)\right\} \subset \SO(4) \subset
    \SO(5).\]
\begin{enumerate}

\item The subgroups $H=\SU(2)$ and $H=T^2$ are symmetric.

\item We have a one-parameter family of circles
\[
H=\Delta_{p,q}\U(1) \subset T^2 \subset K = \U(2) \subset \SO(5).
\]
As in Example 2.2(1c), we always take $p\geq 0$. When $(p,q)=(1,-1)$,
$H=\Delta_{1,-1}\U(1) = \SUxU11$ is a symmetric subgroup of~$K$ and
$[\m,\m] \subset \h$. Hence, condition~(\ref{condition}) holds
trivially in this case. We show that condition~(\ref{condition}) fails
in case $(p,q) \neq (1,-1)$. We have $\s = \so(5)\ominus\u(2)$ and $\m
=\u(2) \ominus \u_{p,q}(1)$ (note that $\m \neq \su(2)$). When $(p,q)
\neq (1,-1)$, we exhibit a commuting pair. We take
\[
X^{\m} = \tfrac12(E_{25} +E_{34}) \quad \hbox{and} \quad
Y^{\m}=\tfrac12(E_{23}+E_{45}),
\]
these elements of $\u(2) \ominus \t^2$ are in $\m$ for any pair
$(p,q)$. Furthermore, we take
\[
X^{\s} = E_{14} +\tfrac12(E_{23}-E_{45}) \quad \hbox{and} \quad Y^{\s}
=  E_{12} +\tfrac12(E_{25}-E_{34}).
\]
We see that $[X,Y] = [X^{\m} + X^{\s}, Y^{\m} + Y^{\s}] = 0$ while
$[X^{\m},Y^{\m}]  = -\tfrac12(E_{24}-E_{35})$, which has nonzero
$\m$-component provided $(p,q) \neq (1,-1)$.

\end{enumerate}

\item {\boldmath $K = \SU(2)$, $K=\SO(3)$, $K=\SO(3)^{\princ}$.}
    Condition~(\ref{condition}) is always satisfied, see
    Remark~\ref{K=A_1}.

\end{enumerate}


\subsection{\boldmath $G = \LG_2$}\label{G2Chains}
We have $\LG_2 \cong \Aut(\bCa)$, the automorphism group of the octonions. We
use the usual embedding of~$\LG_2$ in $\SO(7)$ given by the action on the
purely imaginary octonions, which can be described by the basis below for the
Lie algebra (note there is no $X_3$ or $Y_3$).
\begin{align*}
X_1 &= E_{46} - E_{57},
& Y_1 = 2E_{13} + E_{46} + E_{57}, \\
X_2 &= E_{45} + E_{67},
& Y_2 = 2E_{23} - E_{45} + E_{67}, \\
X_4 &= E_{16} + E_{25},
& Y_4 = 2E_{34} - E_{25} +E_{16}, \\
X_5 &= E_{17} -E_{24},
& Y_5 = 2E_{35} + E_{24} + E_{17}, \\
X_6 &= E_{14} + E_{27},
& Y_6 = 2E_{36} + E_{27} - E_{14}, \\
X_7 &= E_{15} - E_{26},
& Y_7 = 2E_{37} - E_{26} - E_{15}, \\
Z_1 &= 2E_{12}  - E_{47}+ E_{56},
& Z_2 = E_{47} + E_{56}.\hspace{3.2em}
\end{align*}

The conjugacy classes of nonabelian compact connected subgroups of~$\LG_2$
together with inclusion relations are given by Table~\ref{TSubgroupsOfG2},
cf.~\cite[Proposition~15]{kollross}. Note that the two normal subgroups of type
$\LA_1$ in $\SO(4)$ in $ \LG_2$ are not conjugate in $\LG_2$; we will
distinguish these nonconjugate isomorphic subgroups of $\SO(4)$ by writing one
of them with a tilde.
\begin{table}[h!]
\begin{picture}(130,100)
\put(0,40){\fontsize{10}{14}
\begin{tabular}{cccccc}
 & & $\LG_2$ & & & \\
 & & & & & \\
 $\SU(3)$ & & $\SO(4)$ & & & $\LA_1^{\princ}$ \\
 & & & & & \\
 $\U(2)$ & & $\SO(3)$ & &  $\widetilde{\U(2)}$ & \\
 & & & & & \\
 $\SU(2)$ & & & &  $\widetilde{\SU(2)}$ & \\
 & & & & & \\
\end{tabular}}
\put(20,16){\line(0,1){14}} \put(20,44){\line(0,1){13}}
\put(68,44){\line(0,1){14}} \put(68,72){\line(0,1){10}}
\put(112,22){\line(0,1){8}} \put(107,49){\line(-4,1){32}}
\put(135,71){\line(-6,1){55}} \put(62,45){\line(-3,1){35}}
\put(24,72){\line(4,1){32}} \put(27,45){\line(3,1){35}}
\end{picture}
\caption{Conjugacy classes of nonabelian connected subgroups in~$\LG_2$}
\label{TSubgroupsOfG2}
\end{table}

\begin{enumerate}

\item{\boldmath $K = \SU(3)$.}  This can be viewed as the group of elements
    of $\SO(7)$ which fix the purely imaginary octonion given by the third
    canonical basis vector of~$\R^7$.  At the Lie algebra level,  $\su(3)=$
    ${\rm span}\{Z_1,Z_2,X_1,\dots,X_7\}$.

\begin{enumerate}

\item The subgroup $H=\U(2) \cong \SUxU21$ is symmetric.

\item When $H = \SU(2)$ , condition~(\ref{condition}) fails. To see
    this, we use $\su(2)=\text{span}\{X_1,X_2,Z_2\}$ and take $X^{\m} =
    \sqrt{2}X_4 + X_5$, $Y^{\m} = \sqrt{2}X_7 + X_6$, $X^{\s} = -Y_6$,
    $Y^{\s} = Y_5$. Then $[X^{\m},Y^{\m}]=Z_1+3Z_2$ has a nonzero
    $\m$-component, and yet $[X,Y] = [X^{\m}+X^{\s},Y^{\m}+Y^{\s}]=0$.

\item When $H=T^2$ (where $\t^2=$ span$\{Z_1,Z_2\}$),
    condition~(\ref{condition}) fails. We take $X^{\m}= (\sqrt{2} -
    3)X_4 + X_6$, $Y^{\m} = (2\sqrt{2}-1)X_5+ (\sqrt{2}-1)X_7$, and
    $X^{\s} = (1-\sqrt{2})Y_5 + Y_7$, $Y^{\s} = (1-\sqrt{2})Y_6 +Y_4$.
    Then we see $[X^{\m},Y^{\m}]^{\m} = (\sqrt{2}-1)X_2$ and yet
    $[X,Y]= 0$.

\item The subgroup $H=\SO(3)$ is symmetric in $K=\SU(3)$.

\item When $H=\SO(2)$ ($\SO(2) \subset \SU(2)$),
    condition~(\ref{condition}) fails by Lemma~\ref{LemmaHprime}.

\end{enumerate}

\item{\boldmath $K = \SO(4)$.} At the Lie algebra level, we have
\[
\qquad \so(4) = \su(2) \op \widetilde{\su(2)}, \; \hbox{where} \;
\su(2)=\text{span}\{X_1,X_2,Z_2\},\;
\widetilde{\su(2)}=\text{span}\{Y_1,Y_2,Z_1\}.
\]
    Since all rank two subgroups $H$ of $\SO(4)$  are symmetric, we need
    only consider the rank one subgroups of  $\SO(4)$.

\begin{enumerate}

\item The subgroup $H=\SO(3)$ is symmetric in $\SO(4)$.

\item By Theorem~\ref{ThmST2} in \cite{ST},  when $H=\SU(2)$,
    condition~(\ref{condition})
	is satisfied.

\item When $H = \widetilde{\SU(2)}$, we show
    condition~(\ref{condition}) fails. We take $X^{\m}= X_2$,
    $Y^{\m}=X_1$, and $X^{\s}=X_4+X_5$, $Y^{\s}=X_6+X_7$. Then we see
    $[X^{\m},Y^{\m}] = -2Z_2$ and yet $[X,Y]=[X^{\m} + X^{\s},Y^{\m} +
    Y^{\s}] = 0$.

\item We consider the one-parameter family of circles
    $H=\Delta_{\theta}\SO(2)$, where $\theta \in [0,2\pi)$. We show
    that condition~(\ref{condition}) fails for all $\theta$. At the Lie
    algebra level,
    \[
    \Delta_{\theta}\so(2)={\rm span}\{\tfrac{1}{\sqrt{3}}(\cos \theta)Z_1
    + (\sin \theta)Z_2\}.
    \]
    If $\tan\theta \neq \sqrt{3}$  we take $X^{\m}= X_2+Y_2$, $Y^{\m}=
    -3X_1+Y_1$, and $X^{\s} = X_4 + Y_4$, $Y^{\s} = -3X_7+Y_7$. We find
    that $[X,Y]=0$ yet $[X^{\m},Y^{\m}]= 2 (Z_1+3Z_2)$, in $\t^2$. This
    has a nonzero $\m$-component except when $\tan\theta
    =\smash{\sqrt{3}}$. And if $\tan\theta =\smash{\sqrt{3}}$, we can
    instead take $X^{\m} = \smash{\tfrac{1}{\sqrt{2}}(X_1+Y_1)}$,
    $Y^{\m} = \smash{\tfrac{1}{\sqrt{2}}(-X_2+Y_2)}$, while $X^{\s} =
    X_4$ and $Y^{\s} = X_7$. This time $[X,Y]=0$ and $[X^{\m},Y^{\m}]
    =-Z_1-Z_2$. This has a nonzero $\m$-component except when
    $\tan\theta = \frac{1}{\sqrt{3}}$. We conclude that
    condition~(\ref{condition}) fails for every  choice of
    $H=\Delta_{\theta}\SO(2)$.

\end{enumerate}

\item{\boldmath $K = \U(2)$.}  In terms of the basis for $\g_2$ above,
    \[\u(2)={\rm span}\{X_1,X_2,Z_1,Z_2\}.\]

\begin{enumerate}

\item Both $T^2$ and $\SU(2)$ are symmetric subgroups of $\U(2)$.

\item We consider the one-parameter family of circles
    $H=\Delta_{\theta}\U(1) \subset T^2 \subset \U(2)$. Notice this  is
    the same one-parameter family we saw in the case $K=\SO(4)$, but
    since here $K= \U(2)$ is different, $\m$ and $\s$ are also
    different. Here
    \[
    \m = {\rm span }\{X_1,X_2\} \op {\rm span} \left
    \{-\tfrac{1}{\sqrt{3}}(\sin \theta)Z_1 + (\cos \theta)Z_2 \right\}.
    \]
    When $\theta=\frac{\pi}{2}$, $[\m,\m] \subseteq \h$, and
    condition~(\ref{condition}) holds. We show that
    condition~(\ref{condition}) fails if $\theta\neq\frac{\pi}{2}$: We
    may take $X^{\m}=X_2$, $Y^{\m} = X_1 \in\m$ and
    $X^{\s}=-\smash{\tfrac{1}{\sqrt{2}} (X_7-Y_7)}$, $Y^{\s}=
    \smash{\tfrac{1}{\sqrt{2}}(X_4+Y_4)}$. We find that $[X,Y]=0$ and
    $[X^{\m},Y^{\m}]=-2Z_2$, which  has a nonzero $\m$-component when
    $\theta \neq \frac{\pi}{2}$.

\end{enumerate}

\item{\boldmath $K = \widetilde{\U(2)}$.}   In terms of the basis for
    $\g_2$ above,
    \[
    \widetilde{\u(2)}={\rm span}\{Y_1,Y_2,Z_1,Z_2\}.
    \]
\begin{enumerate}
\item As in the case above, both $T^2$ and $\widetilde{\SU(2)}$ are
    symmetric subgroups of $\U(2)$.

\item We again consider the one-parameter family of circles
    $H=\Delta_{\theta}\U(1) \subset T^2$. This time,
\[
\m = {\rm span }\{Y_1,Y_2\} \op {\rm span
}\left\{-\tfrac{1}{\sqrt{3}}(\sin \theta)Z_1 + (\cos
\theta)Z_2\right\}.
\]
We show that condition~(\ref{condition}) holds only for $\sin\theta =
 0$. When $\sin\theta = 0$, $[\m,\m] \subseteq \h$ and
 condition~(\ref{condition}) holds trivially. Otherwise, we take
$X^{\m} = -Y_2$, $Y^{\m} = Y_1$ (in $\m$ regardless of $\theta$), and
$X^{\s} = X_1 +  \sqrt{2}X_6$, $Y^{\s} = X_2+ \sqrt{2}X_5$. For this
pair, $[X,Y]=0$ yet $[X^{\m},Y^{\m}] = -2Z_1$, which has a nonzero
 $\m$-component for $\sin\theta \neq 0$.

\end{enumerate}

\item{\boldmath$K=\SU(2)$, $K=\widetilde{\SU(2)}$, $K=\SO(3)^{\princ}$, and
    $K = \SO(3)$.}     Condition~(\ref{condition}) is always satisfied, see
    Remark~\ref{K=A_1}.

\end{enumerate}

\subsection{  \boldmath $G = \SO(6)$}\label{SO(6)Chains}
We note that at the Lie algebra level,  $\so(6) \cong \su(4)$.  We will use the
$6$-dimensional orthogonal representation in the discussion that follows. We
will need to consider the subgroups of~$\SO(6)$ up to automorphisms.

\begin{table}[h!]
\begin{picture}(156,160)
\put(-75,90) {\fontsize{10}{14}
\begin{tabular}{ccccl}
&  & $\SO(6)$ & & \\
& & & & \\
& $\U(3)$ & $\SO(5)$ & $\SO(4)\SO(2)$ & $\SO(3)\SO(3)$ \\
& & & & \\
$\U(2)\U(1)$ & $\SU(3)$ & $\SO(3)\U(1)$ & $\SO(4)$ & $\SO(3)\SO(2)$ \\
& & & & \\
$\SU(2)\Delta_{\varphi}$ & $\SUxU21$ & $\U(2)$ & & \\
& & & & \\
& $\SU(2)$ & $\Delta\SO(3)$ & $\SO(3)$ & $\SO(3)^{\princ}$ \\
& & & & \\
\end{tabular}}
\put(70,136){\line(0,1){14}} \put(130,136){\line(-4,1){50}}
\put(170,136){\line(-6,1){80}} \put(10,136){\line(4,1){50}}
\put(10,108){\line(0,1){14}} \put(130,108){\line(0,1){14}}
\put(190,108){\line(0,1){14}} \put(128,108){\line(-4,1){50}}
\put(65,108){\line(-4,1){50}} \put(170,108){\line(-6,1){80}}
\put(185,108){\line(-4,1){50}} \put(-45,108){\line(4,1){50}}
\put(38,108){\oval(170,15)[tl]} \put(38,123){\oval(170,15)[br]}
\put(10,80){\line(0,1){14}} \put(5,80){\line(-3,1){40}}
\put(59,80){\line(-6,1){80}} \put(75,80){\line(4,1){50}}
\put(-50,80){\line(-1,5){2}} \put(10,51){\line(0,1){14}}
\put(20,51){\line(5,2){40}} \put(0,51){\line(-5,2){38}}
\put(65,51){\line(-5,6){40}} \put(85,51){\line(0,1){40}}
\put(88,51){\line(4,3){95}} \put(130,51){\line(0,1){42}}
\put(135,51){\line(1,1){42}} \put(168,51){\line(-4,3){92}}
\end{picture}
\vspace{-2em} \caption{Conjugacy classes of nonabelian connected subgroups
in~$\SO(6)$} \label{TSubgroupsOfSO(6)}
\end{table}

Let $\SU(2)\Delta_{\varphi} = \SU(2)\Delta_{\varphi}\U(1)$ (equivalently,
$\SU(2)\Delta_{\varphi}\SO(2)$) denote  the one-parameter family of  subgroups
of $\U(2)\U(1) =\U(3)\cap \SO(4)\SO(2)$ where $\varphi \in[0,2\pi)$ determines
the angle of the diagonal circle. We embed such that  when $\sin\varphi=0$,
$\SU(2)\Delta_\varphi = \U(2)$. In the following, we will assume $\sin\varphi
\neq 0$.

In the table, there are three conjugacy classes of closed subgroups isomorphic
to~$\SO(3)$. They can be distinguished by their representation on~$\R^6$ which
is given by restricting the standard representation of~$\SO(6)$: The group
denoted by $\SO(3)$ acts on $\R^6$ by the adjoint plus a three-dimensional
trivial representation, $\Delta\SO(3)$ acts by the direct sum of two copies of
the adjoint representation and $\SO(3)^{\princ}$ acts irreducibly on a
5-dimensional linear subspace of~$\R^6$.

\begin{proposition}\label{PropSO6Subgr}
All conjugacy classes of closed connected nonabelian subgroups w.r.t.\ inner
and outer automorphisms of~$\SO(6)$ and their inclusion relations are given by
Table~\ref{TSubgroupsOfSO(6)}.
\end{proposition}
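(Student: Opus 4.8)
The plan is to reduce the statement to a question about $6$-dimensional orthogonal representations and then to build the table by descending from the maximal subgroups. Since a closed connected subgroup $H\subseteq\SO(6)$ is recovered up to conjugacy from its action on $\R^6$, two such subgroups are conjugate by an inner automorphism exactly when the associated $6$-dimensional real orthogonal representations are equivalent through an element of $\SO(6)$. Moreover $\operatorname{Out}(\SO(6))\cong\mathbf Z/2$ is generated by conjugation by a reflection in $\mathsf{O}(6)$ (equivalently, by complex conjugation under $\so(6)\cong\su(4)$), so conjugacy up to inner \emph{and} outer automorphisms is the same as $\mathsf{O}(6)$-equivalence of the representation $\R^6|_H$. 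I would run the entire argument at this representation-theoretic level, where both the enumeration and the identification of coincidences become transparent.

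First I would determine the maximal connected subgroups. The reducible ones arise from invariant-subspace decompositions $\R^6=\R^p\op\R^q$ and from an invariant complex structure, yielding $\SO(5)$ (acting as $\R^5\op\R^1$), $\SO(4)\SO(2)$, $\SO(3)\SO(3)$, and $\U(3)$; these are exactly the symmetric subgroups of $\SO(6)$. To see the list is complete I would rule out irreducible maximal subgroups: a simple group acting faithfully and irreducibly on $\R^6$ has dimension at most $15$, and a short check of the irreducible representations of such groups shows that, apart from the defining representation of $\SO(6)$ itself, none is simultaneously real, irreducible, and $6$-dimensional with nonabelian proper image. Indeed $\SU(2)$ and $\SO(3)$ have only odd-dimensional real irreducibles (their $6$-dimensional irreducibles being symplectic), the pertinent $6$-dimensional representation of $\SU(3)$ is complex, and the real $6$-dimensional representation $\Lambda^2\C^4$ of $\SU(4)=\Spin(6)$ realizes the covering $\Spin(6)\to\SO(6)$, whose image is all of $\SO(6)$. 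This fixes the top two rows of Table~\ref{TSubgroupsOfSO(6)}.

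Next I would recurse into each maximal subgroup using known structure. For $\SO(5)$ I would quote the analysis of the previous subsection and its Table~\ref{TSubgroupsOfB2}; for $\U(3)$ I would combine the central circle with the subgroup lattice of $\SU(3)$ already worked out, producing $\U(2)\U(1)$, $\SU(3)$, $\SO(3)\U(1)$, $\SUxU21$, $\U(2)$, and the one-parameter family $\SU(2)\Delta_\varphi$; for $\SO(4)$, $\SO(3)\SO(3)$, and the remaining pieces the subgroup structure is classical. The three nonconjugate copies of $\SO(3)$ are separated precisely by their representation on $\R^6$, as recorded just before the proposition (adjoint plus a trivial summand; $\Delta$ of two adjoints; and the irreducible $5$-dimensional representation plus a trivial summand): comparing these $\mathsf{O}(6)$-characters shows no automorphism carries one to another, and at the same time locates each copy inside the correct maximal subgroup, producing the edges of the table. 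Repeating this descent and discarding the abelian subgroups, which are not listed, yields every class in the table.

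The main obstacle is not exhibiting the subgroups but bookkeeping the automorphisms: I must ensure that distinct inner classes which are genuinely identified by the outer automorphism are listed only once, while classes that merely happen to be abstractly isomorphic remain separate. It is exactly the generator of $\operatorname{Out}(\SO(6))$—conjugation by an $\mathsf{O}(6)$-reflection, equivalently the $D_3=A_3$ diagram flip—that merges the two embeddings exchanged by the half-spin symmetry, such as the two normal $\SU(2)$ factors of $\SO(4)$ already noted to be conjugate. Consequently each coincidence and each inclusion edge must be verified at the level of the representation, by decomposing $\R^6|_H$ into irreducible summands and matching the invariant subspaces; this verification, carried out uniformly across all the classes, is the step that demands the most care.
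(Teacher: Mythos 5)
Your overall strategy is essentially the paper's: the published proof is a single sentence observing that it suffices to check at each node of Table~\ref{TSubgroupsOfSO(6)} that the nonabelian maximal subalgebras are correctly represented, citing \cite[Theorem~15.1]{dynkin1} and \cite[Theorem~2.1]{hyperpolar}. Your descent through the lattice of maximal subgroups is exactly this node-by-node check; what you add is a self-contained proof of the top-level step (the maximal connected subgroups of $\SO(6)$ are $\U(3)$, $\SO(5)$, $\SO(4)\SO(2)$, $\SO(3)\SO(3)$, with no irreducible proper maximal subgroup), which the paper outsources to Dynkin, and the use of the restricted representation on $\R^6$ to separate the three classes of $\SO(3)$, which matches the discussion the paper places just before the proposition. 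That trade is fine: your version is longer but more elementary and makes the table checkable without consulting \cite{dynkin1}.

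Two slips in your write-up should be corrected. First, it is false that $\SU(2)$ has only odd-dimensional real irreducible representations: $\C^2\cong\R^4$ is a real-irreducible $4$-dimensional representation, and in general the quaternionic irreducibles of complex dimension $2k$ give real irreducibles of dimension $4k$. The correct statement is that the real irreducible dimensions of $\SU(2)$ are the odd integers together with the multiples of $4$, so $6$ still does not occur; your conclusion survives, but the reason you give for it does not. Second, the identification of the two normal $\SU(2)$ factors of $\SO(4)\subset\SO(6)$ is \emph{not} an instance of the outer automorphism of $\SO(6)$: conjugation by $\mathrm{diag}(g,-1,1)\in\SO(6)$ with $g\in\mathsf{O}(4)$, $\det g=-1$, already swaps the two factors, so they are conjugate by an \emph{inner} automorphism of $\SO(6)$ -- the paper records precisely this fact (for $\SO(5)$ explicitly, and again in items (3)(b) and (8)(b) of its $\SO(6)$ list). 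Because the table is taken up to inner and outer automorphisms simultaneously, this misattribution does not change the final answer, but it sits exactly in the bookkeeping step you yourself single out as the delicate one: before invoking the $\LD_3=\LA_3$ diagram flip to merge two classes, you must check whether the merge is already inner, and conversely you should exhibit at least one place where the outer automorphism is genuinely needed, since otherwise the phrase ``w.r.t.\ inner and outer automorphisms'' in the statement is doing no work in your argument.
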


\begin{proof}
To verify the table, it suffices to check at each node if the nonabelian
maximal subalgebras are correctly represented, cf.~\cite[Theorem~15.1]{dynkin1}
or \cite[Theorem~2.1]{hyperpolar}.
\end{proof}

\begin{enumerate}
\item{\boldmath $K=\U(3)$.} Here we view
\[
\U(3) = \left\{ \left( \begin{array}{rr} A & -B \\ B & A \end{array} \right)
\in \SO(6) \middle\vert A+iB\in\U(3)\right\}.
\]

\begin{enumerate}

\item Condition~(\ref{condition}) holds if $H$ is one of the subgroups
    $\U(2)\U(1)$, $\SU(3)$, $\SO(3)\U(1)$, $\SUxU21$, and
    $\Delta\SO(3)$, for which we have $[\m,\m] \subseteq \h$.

\item When $H=T^3$, we show condition~(\ref{condition}) fails. Let
    $\t^3 = {\rm span}\{E_{14}, E_{25}, E_{36}\}$. We take
    $X^{\m}=E_{12}+E_{45}$, $Y^{\m}=E_{23}+E_{56}$ and
    $X^{\s}=E_{12}-E_{45}$, $Y^{\s}=-E_{23}+E_{56}$. Then
    $[X^{\m},Y^{\m}]=E_{13} +E_{46}\in \m$, and $[X,Y]=[X^{\m} +
    X^{\s},Y^{\m} + Y^{\s}] = 0$.  This also shows
    condition~(\ref{condition}) fails for every abelian subgroup  $H'
    \subset T^3 \subset K=\U(3)$ by Lemma~\ref{LemmaHprime}.

\item When
\[
 H= \U(2)=\begin{pmatrix} \U(2) & 0 \\ 0 & 1 \end{pmatrix} \subset \U(3),
\]
we show condition~(\ref{condition}) fails.   We can take $X^\m = E_{13}
+ E_{46}$, $Y^\m = E_{36}$, and $X^\s = E_{26}-E_{35}$, $Y^\s =
E_{12}-E_{45}$. Then $[X^\m,Y^\m]  = - [X^\s,Y^\s] = E_{16}+E_{34}$ and
this is in $\m$, while $[X^\m+X^\s,Y^\m+Y^\s]=0$.  This also shows
    condition~(\ref{condition}) fails for  $H =\SU(2)$ by
    Lemma~\ref{LemmaHprime}.

\item When $H= \SU(2)\Delta_{\varphi}\U(1)$,
    condition~(\ref{condition}) fails, provided  $\tan \varphi \neq
    -\sqrt{2}$. If $\tan\varphi = -\sqrt{2}$, then $H=\SUxU21$, a
    symmetric subgroup.  If  $\sin\varphi = 0$, we are in the case (c)
    above. At the Lie algebra level, $\su(2)=$span$\{ E_{12}+E_{45},
E_{15}+E_{24},  E_{14}-E_{25}\}$ and the diagonal $\u(1)$ is
$$\text{span}\{ \tfrac{1}{\sqrt 2}\cos\varphi (E_{14}+E_{25}) +\sin\varphi\, E_{36} \}.$$
We take $X^{\m}=E_{23}+E_{56}$,
    $Y^{\m}=E_{26}+E_{35}$, and $X^{\s}=E_{12}+E_{13}-E_{45}-E_{46}$,
    $Y^{\s}=E_{15}-E_{16}-E_{24}+E_{34}$. Then $[X^\m + X^\s, Y^\m +
    Y^\s]=0$ and $[X^{\m},Y^{\m}]=2(E_{25}-E_{36})$ has a nonzero
    $\m$-component, provided $\tan \varphi \neq -\sqrt{2}$.
\end{enumerate}

\item{\boldmath $K=\SO(5)$.} Here we view
\[
\SO(5) = \begin{pmatrix} \SO(5) & 0 \\ 0 & 1 \end{pmatrix} \subset
\SO(6).
\]

\begin{enumerate}

\item Condition~(\ref{condition}) holds for the symmetric subgroups
    $H=\SO(3)\SO(2)$ and $H=\SO(4)$.

\item When $H=\U(2) \subset \SO(4)$, we show
    condition~(\ref{condition}) fails. We take $X^\m= E_{25}+E_{35}$,
    $Y^\m= E_{15}+E_{45}$, and $X^\s= E_{26}+E_{36}$, $Y^\s= -
    E_{16}-E_{46}$. Then $[X^\m,Y^\m]=E_{12}+E_{13}-E_{24}-E_{34}$ has
    a nonzero $\m$-component, yet $[X,Y]=0$.  Thus for $H=\SU(2), H=T^2
    \subset \U(2)$, condition~(\ref{condition}) fails as well,  by
    Lemma~\ref{LemmaHprime}.

\item When $H=\SO(3) \subset \SO(4)$, we take $X^\m=E_{15}$,
    $Y^\m=E_{14}$, and $X^\s=E_{46}$, $Y^\s=E_{56}$. We see $[X^\m +
    X^\s, Y^\m +Y^\s]=0$  yet $[X^\m, Y^\m]=-E_{45} \in\m$.

\item For the subgroup $H=\SO(3)^{\princ} \subset \SO(5)$, we show
    condition~(\ref{condition}) fails. We give a basis for this maximal
    Lie subalgebra:
\[
\qquad\; \so(3)^{\princ} = \text{span}
\left\{\sqrt3 E_{14} -E_{24}  + E_{35}, \sqrt3 E_{15}+E_{25} +E_{34}, 2E_{23}-E_{45} \right\}.
\]
We take $X^{\m} = E_{12}$,  $Y^{\m} = E_{13}$, and $X^{\s} =  E_{26}$,
$Y^{\s} = -E_{36}$,  so that $[X^{\m},Y^{\m}] =   -E_{23}$ has a
nonzero $\m$ component, yet $[X,Y]=[X^{\m}+X^{\s} ,Y^{\m}+Y^{\s}] = 0.$

\end{enumerate}

\item{\boldmath $K=\SO(4)\SO(2)$.} Here we view
\[
K = \begin{pmatrix}
    \SO(4) & 0
    \\ 0 & \SO(2) \end{pmatrix} \subset \SO(6).
\]

\begin{enumerate}

\item For each of the following subgroups $H$ we have $[\m,\m]\subseteq
    \h$ and thus condition~(\ref{condition}) holds:\\ $\SO(4)$,
    $\SO(3)\SO(2)$, $\SO(3) \subset \SO(4)$, $\U(2)\U(1)$, $\U(2)$,
    $T^3 = \SO(2)\SO(2)\SO(2)$, $T^2 = \SO(2)\SO(2)\cdot\Id$.

\item When $H=\SU(2)\SO(2) \subset \SO(4)\SO(2)$, we show
    condition~(\ref{condition}) fails. Here we use that
    $\so(4)=\su(2)\op\su(2)$, so that one $\su(2)$ factor is in $\h$,
    and $\m$ is the other $\su(2)$ factor: thus, $[\m,\m]=\m$.  (Note
    the two $\SU(2)$ factors are conjugates in $\SO(6)$.) Take
    $X^{\m}=E_{12}-E_{34}$, $Y^{\m}=E_{14}-E_{23}$, and
    $X^{\s}=\sqrt{2}(E_{36}+E_{45})$, $Y^{\s}=\sqrt{2}(E_{16}+E_{25})$.
    Here $\m=\text{span}\{X^{\m},Y^{\m},[X^{\m},Y^{\m}]\}$ and
    $[X,Y]=[X^{\m} + X^{\s},Y^{\m} + Y^{\s}] = 0$.

    \item When $H=\SU(2)\Delta_\varphi$, condition~(\ref{condition})
        fails. At the Lie algebra level, this is
\[\qquad\quad\quad\text{span}\{ E_{12}+E_{34}, E_{14}+E_{23},  E_{13}-E_{24}\}\op
\text{span}\{ \tfrac{1}{\sqrt 2}\cos\varphi (E_{13}+E_{24}) +\sin\varphi\, E_{56} \}.\]
We may take the same $X^\m + X^\s$, $Y^\m + Y^\s$ as in the previous
example. Note that $[X^{\m},Y^{\m}] = -2(E_{13}+E_{24})$ has a non-zero
$\m$-component provided $\sin\varphi \neq 0$.

\item When $H=\Delta_{\theta}\U(1)\SO(2)$ (where $\Delta_{\theta}\U(1)
    \subset \U(2)$) it is easy to see that condition~(\ref{condition})
    fails. When $\U(1)$ is entirely within one of the $\SU(2)$ factors,
    we know condition~(\ref{condition}) fails by the previous example.
    Otherwise, we take $X^{\m}=E_{13}-E_{24}$, $Y^{\m}=E_{23}+E_{14}$
    (in $\m$ regardless of angle $\theta$), and
    $X^{\s}=\sqrt{2}(E_{25}+E_{36})$ and
    $Y^{\s}=\sqrt{2}(E_{15}-E_{46})$. Then
    $[X,Y]=[X^{\m}+X^{\s},Y^{\m}+Y^{\s}]=0$, while
    $[X^{\m},Y^{\m}]=-2(E_{12}+E_{34})$, which has a nonzero
    $\m$-contribution, since $\Delta\U(1)$ does not lie within either
    $\SU(2)$ factor.

\end{enumerate}

\item{\boldmath $K=\SO(3)\SO(3)$.} Here we view
    \[
    K = \begin{pmatrix} \SO(3) & 0
    \\ 0 & \SO(3) \end{pmatrix}\subset \SO(6).
    \]

\begin{enumerate}

\item  Condition~(\ref{condition}) holds for the symmetric subgroups
    $H=\SO(3)\SO(2)$ and $H=\Delta\SO(3)$, as well as for
    $H=\SO(2)\SO(2)$ which has $[\m,\m]\subset \h$.

\item When $H=\SO(3) \cdot \Id\cong\Id \cdot \SO(3)$ we see
    condition~(\ref{condition}) fails. We  use that $\m$ is the other
    subalgebra $\so(3)$, so that $[\m,\m]=\m$. For  $H=\SO(3) \cdot
    \Id$, we may take $X^{\m}=E_{45},Y^{\m}=E_{46}$, so that
    $[X^{\m},Y^{\m}]=-E_{56} \in \m$. By choosing $X^{\s}=E_{25},
    Y^{\s}=-E_{26}$ we get $[X^\m+X^\s,Y^\m+Y^\s]=0$.

\item In the case $H=\Delta_{\theta}\SO(2)$,
    condition~(\ref{condition}) fails. When $\SO(2)$ lies entirely
    within one of the $\SO(3)$ factors, condition~(\ref{condition})
    fails by the previous example.  Otherwise we exhibit a commuting
    pair: $X^{\m}=E_{23}$, $Y^{\m}=E_{13}$, and $X^{\s}=E_{14}$,
    $Y^{\s}= -E_{15}$. We see $[X^\m+X^\s,Y^\m+Y^\s]=0$ while
    $[X^{\m},Y^{\m}]= E_{12}$, which has a nonzero $\m$-component as
    long as our $\SO(2)$ does not lie entirely within either of the
    $\SO(3)$ factors.

\end{enumerate}
\item{\boldmath $K=\U(2)\U(1) = \U(2)\SO(2)$.} (This is a subset of
    $\SO(4)\SO(2)$.)

\begin{enumerate}

\item For each of the subgroups $H=\U(2), T^2\SO(2),
    \SU(2)\Delta_{\varphi}, \SU(2)\SO(2), \SU(2)\cdot\Id$, and
    $\SO(2)\SO(2)$, we have
    $[\m,\m]^\m=0$.  Thus condition~(\ref{condition}) holds in all
    these cases.

\item When $H=\U(1)\cdot\SO(2)$, $\m=\su(2)$ and since $[\m,\m]=\m$, we
    show condition~(\ref{condition}) fails.   We take
    $X^\m=E_{12}+E_{34}$, $Y^\m=E_{14}+E_{23}$, and $X^\s=\sqrt
    2(E_{15}+E_{26})$, $Y^\s=\sqrt 2(E_{35}-E_{46})$. We see
    $[X^\m,Y^\m]=2(E_{13}-E_{24})\in\m$, while $[X^\m +X^\s, Y^\m
    +Y^\s]=0$. By Lemma~\ref{LemmaHprime}, condition~(\ref{condition})
    fails for every abelian $H' \subset \U(1)\cdot\SO(2)$.

\end{enumerate}
\item{\boldmath $K=\SU(3)$.}

\begin{enumerate}

\item Condition~(\ref{condition}) holds for symmetric subgroups
    $H=\SUxU21$, $H=\Delta\SO(3)$.

\item When $H=T^2$ we show condition~(\ref{condition}) fails. We take
    $X^\m=E_{12}+E_{45}$, $Y^\m=E_{13}+E_{46}$, and then we take
    $X^\s=E_{13}-E_{46}$, $Y^\s=E_{12}-E_{45}$. It is easy to see that
    $[X^{\m}+X^{\s},Y^{\m}+Y^{\s}]=0$ while $[X^\m,Y^\m]$ is a nonzero
    element of~$\m$.

\item When $H=\SU(2)$, condition~(\ref{condition}) cannot hold.  We
    exhibit a commuting pair: We take $X^{\m}=E_{23}+E_{56}$,
    $Y^{\m}=E_{26}+E_{35}$, and $X^{\s}=E_{12}+E_{13}-E_{45}-E_{46}$,
    $Y^{\s}=E_{15}-E_{16}-E_{24}+E_{34}$. Then $[X^\m + X^\s, Y^\m +
    Y^\s]=0$ and $[X^{\m},Y^{\m}]=2(E_{25}-E_{36})$ has a nonzero
    $\m$-component.

\end{enumerate}
\item{\boldmath $K=\SO(3)\U(1)$.} (Here, $\SO(3) = \Delta\SO(3) \subset
    \SO(3)\SO(3)$.)

\begin{enumerate}

\item  The subgroup $H=\SO(2)\U(1)$ is symmetric, and
    $H=\Id\cdot\Delta\SO(3)$ has $[\m,\m]=0$.

\item We consider the family of subgroups $H=\Delta_{\theta}\SO(2)$,
    where when $\sin\theta =0$, $H=\U(1)\cdot\Id$, and $\cos\theta=0$
    corresponds to $H=\Id\cdot\SO(2)$. Just as in the case above, when
    $\theta=0$, $[\m,\m]^\m=0$ and condition~(\ref{condition}) holds.
    We prove that when $\sin\theta \neq 0$,
    condition~(\ref{condition}) fails by exhibiting a pair of commuting
    vectors: Let $X^\m=E_{12}+E_{45}$, $Y^\m=E_{13}+E_{46}$, and
    $X^\s=E_{12}-E_{45}$, $Y^\s=E_{13}-E_{46}$. We see $[X^\m +X^\s,
    Y^\m +Y^\s]=0$, while $[X^\m,Y^\m]=-(E_{23}+E_{56})$, which has a
    nonzero $\m$-component, since $\sin\theta \neq 0$.

\end{enumerate}
\item{\boldmath $K=\SO(4)$.}
\begin{enumerate}

\item Both $H=\SO(2)\SO(2)$ and $H=\SO(3)$ are symmetric subgroups.

\item When $H=\SU(2)$ (recall, $\so(4)=\su(2)\op\su(2)$ with conjugate
    factors $\su(2)$), we have $\m=\su(2)$, so that $[\m,\m]=\m$.  In
    this case, condition~(\ref{condition}) fails. We take
    $X^\m=E_{12}-E_{34}$, $Y^\m=E_{13}+E_{24}$, and $X^\s=\sqrt
    2(E_{35}+E_{16})$, $Y^\s=\sqrt 2(E_{25}+E_{46})$. Then $[X^\m +
    X^\s, Y^\m + Y^\s]=0$, while $[X^\m,Y^\m]=2(E_{14}-E_{23})$.

\item When $H=\Delta_{\theta}\SO(2)$, we show
    condition~(\ref{condition}) fails for all choices of~$\theta$. We
    may take $X^\m=E_{23}$, $Y^\m=E_{13}$ and $X^\s=E_{25}$,
    $Y^\s=-E_{15}$. For this pair, $[X^\m + X^\s, Y^\m + Y^\s]=0$ and
    $[X^\m,Y^\m]^\m \neq 0$ as long as $\sin\theta \neq 0$. In case
    $\sin\theta=0$, we may take $X^\m=E_{14}$, $Y^\m=E_{13}$ and
    $X^\s=E_{45}$, $Y^\s=-E_{35}$, so that $[X,Y]=0$ and $[X^\m,Y^\m]
    =E_{34}\in\m$.

\end{enumerate}
\item{\boldmath $K=\SO(3)\SO(2)$.} (This is a subset of $\SO(5)$.)

\begin{enumerate}

\item The subgroup $H=\SO(2)\SO(2)$ is symmetric and $H=\SO(3) \cdot
    \Id$ has $[\m,\m]=0$.

\item We consider the family of subgroups $H=\Delta_{\theta}\SO(2)$,
    where when $\sin\theta =0$, $H=\SO(2)\cdot\Id$, and $\cos\theta=0$
    corresponds to $H=\Id\cdot\SO(2)$. We see that in the case
    $\sin\theta=0$, $[\m,\m]^\m=0$ and condition~(\ref{condition})
    holds. But when $\sin\theta \neq 0$,  we show
    condition~(\ref{condition}) fails:  We take $X^\m=E_{12}$,
    $Y^\m=E_{13}$, $X^\s=E_{24}$, $Y^\s=-E_{34}$.  Here we have $[X^\m
    +X^\s, Y^\m +Y^\s]=0$, while $[X^\m, Y^\m] =-E_{23}$, with a
    nonzero $\m$-component  when $\sin\theta \neq 0$.

\end{enumerate}
\item{\boldmath $K=\SU(2)\Delta_{\varphi}\SO(2)$.}  Recall, $K \subset
    \U(2)\SO(2)$ where $\varphi$ determines the circle angle in
    $\U(2)\SO(2)$. On the Lie algebra level, $\k=\text{span}\{
    E_{12}+E_{34}, E_{14}+E_{23}, E_{13}-E_{24}\}\op \text{span}\{
    \frac{1}{\sqrt 2}\cos\varphi (E_{13}+E_{24}) +\sin\varphi\, E_{56} \}$.

\begin {enumerate}

\item For the symmetric subgroups $H=\SU(2)\cdot\Id$,
    $H=\SO(2)\Delta_{\varphi}\SO(2)$, condition~(\ref{condition}) holds.
\item Consider $H=\Delta_{\theta}\SO(2) \subset
    \SO(2)\Delta_{\varphi}\SO(2)$.  Here the first $\SO(2)$ is the
    symmetric subgroup of $\SU(2)$;  $H$ is the   diagonally embedded
    circle.   When $\sin\theta=0$, $H=\SO(2)\cdot\Id$ and we have
    $[\m,\m]^\m=0$, thus condition~(\ref{condition}) holds. But
when $\sin\theta \neq 0$, then we find a commuting pair. We take
    $X^\m=E_{12}+E_{34}$, $Y^\m=E_{14}+E_{23}$, and $X^\s=\sqrt
 2(E_{15}+E_{26}$, $Y^\s=\sqrt 2(E_{35}-E_{46})$. We have $[X^\m +X^\s,
Y^\m +Y^\s]=0$, while $[X^\m, Y^\m] =2(E_{13}-E_{24})$, which has a nonzero
$\m$-component since $\sin\theta\neq 0$.

\end{enumerate}
\item {\boldmath $K=\SUxU21$.}  (This is a subset of $\SU(3)$.)

\begin {enumerate}
\item  The  subgroups $H=T^3$ and $H=\SU(2)$ are symmetric;
  condition~(\ref{condition}) holds.
  \item Consider the one-parameter family of diagonally embedded circles
      $H=\Delta_{\theta}\SO(2)$. When $\sin\theta=0$, $H=\SO(2) \subset \SU(2)$
      and $[\m,\m]\subset \h$. But when $\sin\theta \neq 0$, we show
      condition~(\ref{condition})  cannot hold, by finding a commuting pair. We
      take $X^\m= E_{12}+E_{45}$, $Y^\m = E_{15} + E_{24}$ (in $\m$ regardless
      of~$\theta$), and $X^\s=\sqrt{2}(E_{16}+E_{35})$,
      $Y^\s=\sqrt{2}(E_{46}-E_{23})$. We have
$[X^\m +X^\s, Y^\m +Y^\s]=0$, while $[X^\m, Y^\m] =2(E_{14}-E_{25})$, which has
a nonzero $\m$-component provided $\sin\theta\neq 0$.
\end{enumerate}
\item {\boldmath $K=\U(2)$.}
\begin{enumerate}
\item Both $T^2$ and $\SU(2)$ are symmetric;   condition~(\ref{condition})
    holds.
\item We consider the one-parameter family of diagonally embedded circles
    $H_{\theta}=\Delta_{\theta}\U(1)$. When $H_{\theta} \subset \SU(2)$,
    then $[\m,\m]^\m=0$. This is the only case when
    condition~(\ref{condition}) holds. Otherwise, the pair of vectors given
    above (in (11)(b)) serves as our commuting pair here with no
    modification needed.
\end{enumerate}
\item {\boldmath $K=\SU(2) \subset \SU(3)$, $K=\Delta\SO(3) \subset
    \SO(3)\SO(3)$,
	$K=\SO(3)\cdot\Id \subset \SO(3)\SO(3)$, $K=\SO(3)^{\princ} \subset
\SO(5)$.}     Condition~(\ref{condition}) is always satisfied, see
Remark~\ref{K=A_1}.

\end{enumerate}


\end{document}